\theoremstyle{plain}
\newtheorem{theorem}[equation]{Theorem}
\newtheorem{corollary}[equation]{Corollary}
\newtheorem{proposition}[equation]{Proposition}
\theoremstyle{definition}
\newtheorem{definition}[equation]{Definition}
\newtheorem{example}[equation]{Example}
\newtheorem{remark}[equation]{Remark}
\newcommand*{\da@rightarrow}{\mathchar"0\hexnumber@\symAMSa 4B }
\newcommand*{\da@leftarrow}{\mathchar"0\hexnumber@\symAMSa 4C }
\newcommand*{\da@xarrow}[7]{%
  \sbox0{$\ifx#7\scriptstyle\scriptscriptstyle\else\scriptstyle\fi#5#1#6\m@th$}%
  \sbox2{$\ifx#7\scriptstyle\scriptscriptstyle\else\scriptstyle\fi#5#2#6\m@th$}%
  \sbox4{$#7\dabar@\m@th$}%
  \dimen@=\wd0 %
  \ifdim\wd2 >\dimen@
    \dimen@=\wd2 %
  \fi
  \count@=2 %
  \def\da@bars{\dabar@\dabar@}%
  \@whiledim\count@\wd4<\dimen@\do{%
    \advance\count@\@ne
    \expandafter\def\expandafter\da@bars\expandafter{%
      \da@bars
      \dabar@
    }%
  }%
  \mathrel{#3}%
  \mathrel{%
    \mathop{\da@bars}\limits
    \ifx\\#1\\%
    \else
      _{\copy0}%
    \fi
    \ifx\\#2\\%
    \else
      ^{\copy2}%
    \fi
  }%
  \mathrel{#4}%
}
\DeclareMathOperator\Lie{Lie}
\DeclareMathOperator\Ad{Ad}
\DeclareMathOperator\Aut{Aut}
\DeclareMathOperator{\character}{char}
\DeclareMathOperator\Spin{Spin}
\DeclareMathOperator\Dynkin{Dynkin}
\DeclareMathOperator\Int{Int}
\DeclareMathOperator\Out{Out}
\newcommand{\fin}{\text{\textnormal{fin}}}
\begin{document}
\title{Disconnected reductive groups}
\author{Marisa Gaetz\thanks{Marisa Gaetz was supported by the NSF Graduate Research Fellowship Program under Grant No.~2141064 and by the Fannie \& John Hertz Foundation.} \\Department of
  Mathematics \\ MIT, Cambridge, MA 02139
\and David A. Vogan, Jr.\\2-355, Department of Mathematics\\ MIT,
  Cambridge, MA 02139}

\date{}
\maketitle

\begin{abstract}
In this paper, we describe the possible disconnected complex reductive algebraic groups $E$ with component group $\Gamma = E/E_0$. We show that there is a natural bijection between such groups $E$ and algebraic extensions of $\Gamma$ by $Z(E_0)$. 
\end{abstract}

\section{Introduction}\label{sec:intro}
\setcounter{equation}{0}
This paper is concerned with the general problem of understanding and classifying \textit{disconnected} reductive algebraic groups. If $E$ is such a group, and $G:= E_0$ its identity
component, then
\begin{equation}\label{eq:disc1}\begin{aligned}
    \Gamma := E/&E_0 = E/G\\
  1 \longrightarrow G \longrightarrow &E \buildrel p_E \over
  \longrightarrow \Gamma \longrightarrow 1 
\end{aligned}\end{equation}
with $\Gamma$ a finite group. We will begin in Section \ref{sec:connected} with Chevalley's description of the connected reductive algebraic group $G$. We will then use this to describe the possibilities for $E$ in Section \ref{sec:disconnected}. The main result of the paper, formulated more completely in Theorem \ref{thm:listE}, is as follows:

\begin{theorem} \label{thm:main}
Let $\Gamma$ be a finite group. Let $G$ be a connected reductive algebraic group over a field $k$ with $\character (k) \nmid \vert \Gamma \vert$, equipped with a group homomorphism $\Ad \colon \Gamma \rightarrow \Out (G)$. Then the isomorphism classes of algebraic extensions $E$ of $\Gamma$ by $G$ such that the induced map $\Gamma \rightarrow \Out(G)$ agrees with $\Ad$ are parameterized by 
$$H^2(\Gamma, Z(G)) \simeq H^{2} (\Gamma, Z(G)_{\fin} ),$$
where $Z(G)_{\fin }$ is the torsion subgroup of the center of $G$.
\end{theorem}

\begin{example}
Suppose that $G = GL(n,\mathbb{C})$ and that $\Gamma = \mathbb{Z}/r\mathbb{Z}$ for some $n,r \in \mathbb{N}$, with $\Gamma$ acting trivially on $G$. In this case, $H^2(\Gamma, Z(G)) = H^2(\mathbb{Z}/r\mathbb{Z}, \mathbb{C}^{\times}) = 0$. Therefore, Theorem \ref{thm:main} implies that any extension of $\Gamma$ by $G$ is a semidirect product.
\end{example}

\begin{example}
Suppose that $G = SL(2n,\mathbb{C})$ (with $n\ge 2$), and that $\Gamma = {\mathbb Z}/2{\mathbb Z}$ acts by outer automorphisms on $G$. This means that some element of the non-identity component of an extension $E$ acts on $G$ by $g\mapsto {}^t g^{-1}.$ In this case $Z(G) = \mu_{2n}$, the group of $2n$-th roots of unity, and $\Gamma$ acts on $Z(G)$ by inversion. (The reason is that $Z(G)$ consists of scalar matrices, and inverse transpose sends such a matrix to its inverse.) It is easy to calculate
$$H^2(\Gamma, Z(G)) = (1,\text{(one other class)}).$$
The first of the corresponding extensions of $\Gamma$ by $Z(G)$ is the semidirect product
$$Z(G) \rtimes \Gamma.$$
This is the dihedral group, generated by a primitive $2n$-th root of 1 (which we will call $\omega$), and an element $\delta$ satisfying
$$\delta\omega \delta^{-1} = \omega^{-1}, \quad \delta^2 = 1.$$
The dihedral group has $2n + 1$ elements of order $2$: the $2n$ elements $z\delta$ (with $z\in \mu_{2n}$), and $-1 \in \mu_{2n}$.

The second extension is generated by $\omega$ and an element $\delta'$ satisfying
$$\delta'\omega (\delta')^{-1} = \omega^{-1}, \quad (\delta')^2 = \omega^n = -1.$$
In the case $n=2$, this is the quaternion group of order eight. This second extension has just one element of order 2, so is not isomorphic to the dihedral group.

To make the corresponding extensions of $G = SL(2n,{\mathbb C})$, it seems natural to adjoin to $G$ an element $x$ satisfying
$$ xgx^{-1} = {}^t g ^{-1}, \quad x^2 = z \in Z(G).$$
These relations are consistent as long $z = \pm 1$; but they are {\em not} the relations we are going to use. The reason is that we will be taking advantage of the precise structure theory for $G$ created by Chevalley. In the case of $SL(n,{\mathbb C})$, this means using the Borel subgroup of upper triangular matrices; and the inverse transpose automorphism takes upper triangular matrices to lower triangular matrices. We therefore define $J$ to be the square antidiagonal matrix of size $2n$, with entries
$$J_{pq} = \delta_{p+q, 2n+1}.$$
The automorphism of $SL(2n,{\mathbb C})$
$$ d(g) = J\,{}^t g^{-1}J^{-1}$$
{\em does} preserve upper-triangular matrices. We can form extensions
$E_{\pm 1}$ generated by $SL(2n,{\mathbb C})$ and an element $\delta$ satisfying
$$\delta g\delta^{-1} = d(g), \quad \delta^2 = \pm 1.$$
These two extensions $E_{\pm 1}$ are the two extensions of $\Gamma$ by $SL(2n,{\mathbb C})$ described by the theorem.

The case of $SL(2n+1,{\mathbb C})$ with a nontrivial $\Gamma$ action can be analyzed in the same way; in this case the conclusion is that $H^2(\Gamma, Z(G))$ is trivial, so any extension must be a semidirect product.
\end{example}

\begin{example}
Suppose $V = ({\mathbb Z}/2{\mathbb Z})^2$ is the Klein four-group, and $\Gamma = S_3$ acts on $V$ by permuting the three elements of order two. It is well known that there is a short exact sequence
$$1\rightarrow V \rightarrow S_4 \rightarrow S_3 \rightarrow 1,$$
with $V$ identified with the group of even permutations of order one or two in $S_4$. It is also well known that this extension is a semidirect product (any standard inclusion of $S_3$ in $S_4$ providing a section). It is probably well known that 
$$H^2(S_3,V)$$
is trivial, so that $S_4$ is the {\em only} extension; but this was not known to us, so we will provide a proof below.

It now follows from our main theorem that {\em any} extension of $\Spin(8,{\mathbb C})$ by $S_3$ (the outer automorphism group of $D_4$) must also be trivial. We will describe such an extension constructed in an interesting way, for which the triviality is not evident.

Let $H$ be a simple complex reductive group of type $F_4$, $T$ a maximal torus, and $G \supset T$ the subgroup corresponding to the long roots. Then it is well known that 
$$G \simeq \Spin(8,{\mathbb C}), \quad Z(G) \simeq V.$$
Because the Weyl group $W(H,T)$ must preserve the set of long roots, the normalizer of $T$ in $H$ must be contained in the normalizer of $G$ in $H$. It follows easily that
$$N(G,H)/G \simeq N(T,H)/N(T,G) \simeq W(F_4)/W(D_4).$$
These last two groups have orders 1152 and 192 respectively, and it follows easily that
$$W(F_4)/W(D_4) \simeq S_3,$$
acting on $D_4$ by diagram automorphisms. We now have an extension
$$E = N(\Spin(8,{\mathbb C}),H),$$
$$1 \rightarrow \Spin(8,{\mathbb C}) \rightarrow E \rightarrow S_3 \rightarrow 1,$$
with $S_3$ acting on $\Spin(8,{\mathbb C})$ by diagram automorphisms. 

Here is a proof of the triviality of $H^2(S_3,V)$. Suppose we have a group extension
$$1\rightarrow V \rightarrow F \rightarrow S_3 \rightarrow 1,$$
with $S_3$ acting on $V$ by permuting the nontrivial elements $\{\epsilon_i\mid i\le i\le 3\}$. We want to prove that $F$ must be a semidirect product. This means that we must choose preimages for generators of $S_3$ that multiply in the same way as the elements of $S_3$. Now $S_3$ is generated by two transpositions $s$ and $t$, subject to the relations
$$s^2 = t^2 = 1, \quad sts = tst.$$
Consider a transposition $s_{ij} \in S_3$. Let $\tilde {s_{ij}}$ be any preimage in $E$. Then $\tilde{s_{ij}}^2$ must be in $V$, and must be fixed by the transposition $s_{ij}$; so 
$$\tilde{s_{ij}}^2 = 1 \ \text{or} \ \epsilon_k,$$
with $k$ the third element of $\{1,2,3\}$ (not equal to $i$ or $j$). In the first case we are happy. In the second case we can replace our representative by
$$\tilde{s_{ij}}' = \tilde{s_{ij}} \epsilon_i,$$
and calculate
$$(\tilde{s_{ij}}')^2 = \epsilon_k\epsilon_i\epsilon_j =1.$$

Choose now two generating transpositions $s$ and $t$ for $S_3$, and choose representatives $\tilde s$ and $\tilde t$ of order 2. As representative of the three-cycle $st$, we choose $\widetilde{st} =\tilde s\tilde t$. The cube of this element must be a representative of $1\in S_3$, so must belong to $V$; and clearly it must be fixed by the 3-cycle $st$. The only such element of $V$ is the identity, so
$$(\tilde s \tilde t)^3 = 1,$$
or equivalently
$$\tilde s\tilde t \tilde s = \tilde t \tilde s \tilde t.$$
This is the braid relation for $S_3.$ Our (slightly modified) representatives $\tilde s$ and $\tilde t$ therefore satisfy the defining relations of $S_3$, and provide the cross section proving that $E$ must be a semidirect product, as we wished to show.
\end{example}

\section{Connected reductive algebraic groups} \label{sec:connected}
\setcounter{equation}{0}

We begin with a complex connected reductive algebraic group $G$ equipped
with a \textit{pinning}:
\begin{definition}[see for example {\cite[Definition 1.18]{AV}}]
Suppose $G$ is a complex connected reductive algebraic group. A \textit{pinning} of $G$ consists of
\begin{enumerate}
    \item a Borel subgroup $B \subset G$;
    \item a maximal torus $H \subset B$; and
    \item for each simple root $\alpha \in \Pi (B,H)$, a choice of basis vector $X_{\alpha} \in \mathfrak{g}_{\alpha}$.
\end{enumerate}
\end{definition}
\begin{subequations}\label{se:rootdata}
The reason we use \cite{AV} as a reference rather than something older and closer
to original sources is that this reference is concerned, as we will
be, with representation theory of disconnected groups. Write
\begin{equation} \label{eq:roots}
\begin{tabular}{r c l c r c l }
    $R(G,H)$ & $\subset$ & $X^*(H)$, & \hspace{.25cm} & $R^{\vee} (G,H)$ & $\subset$ & $X_* (H)$  \\
    $\Pi (B,H)$ & $\subset$ & $R^+ (B,H)$, & \hspace{.25cm} & $\Pi^{\vee}(B,H)$ & $\subset$ & $(R^{\vee})^+ (B,H)$ 
\end{tabular}
\end{equation}
for the roots, coroots, simple roots, and simple coroots that are specified by the pinning $\{ B,H,X_{\alpha} \}$ of $G$. Here, the (co)roots live in the (co)character lattice and the simple (co)roots live in the set of positive (co)roots. For clarity, we will try to write $\alpha$ for simple roots and $\beta$ for arbitrary roots.

Each $\beta\in R(G,H)$ defines a reflection
\begin{equation}\label{eq:sbeta}
  s_\beta \in \Aut(X^*(H)),\quad s_\beta(\lambda) = \lambda -
\langle\beta^\vee,\lambda\rangle\beta \qquad (\lambda \in X^*(H)),
\end{equation}
where $\langle \cdot , \cdot \rangle$ denotes the natural pairing between the dual lattices $X_*(H)$ and $X^*(H)$. These reflections generate the {\em Weyl group of $H$ in $G$}: 
\begin{equation}\label{eq:W}
   W(G,H):=\langle \{s_\beta \mid \beta\in R(G,H)\} \rangle \subset
    \Aut(X^*(H)).
\end{equation}
For each $s_{\beta} \in \Aut(X^*(H))$, there is a transpose automorphism of $X_* (H)$:
\begin{equation}\label{eq:sbetavee}
  s_{\beta^\vee} \in \Aut(X_*(H)),\quad s_{\beta^\vee}(\ell) = \ell -
\langle\ell,\beta\rangle\beta^\vee \qquad (\ell \in X_*(H)).
\end{equation}
The inverse transpose isomorphism
\begin{equation} \label{eq:inv-transpose}
\Aut(X^*(H)) \simeq \Aut(X_*(H)), \qquad T \mapsto {}^t T^{-1}
\end{equation}
identifies $W(G,H)$ with the group of automorphisms of $X_*(H)$
generated by the various $s_{\beta^\vee}$. 
\end{subequations}

\begin{proposition}[Springer {\cite[Proposition 8.1.1]{Springer}}] \label{prop:8.1.1}
Let $G$ be a complex connected reductive algebraic group with pinning $\{ B,H,X_{\alpha} \}$. Let $\mathbb{G}_a$ be $\mathbb{C}$, viewed as an additive group, with Lie algebra $\Lie(\mathbb{G}_a) = \mathbb C$.
\begin{enumerate}[label=(\roman*)]
    \item For $\beta \in R(G,H)$, there is an isomorphism $u_{\beta}$ of $\mathbb{G}_a$ onto a closed subgroup $U_{\beta}$ of $G$ such that $\Lie (U_{\beta}) = \mathfrak{g}_{\beta}$.
    \item If $\alpha \in \Pi(G,H)$, then $u_\alpha$ is characterized by requiring $du_\alpha(1) = X_\alpha$.
    \item $H$ and the $U_{\beta}$ with $\beta \in R(G,H)$ generate $G$.
\end{enumerate}
\end{proposition}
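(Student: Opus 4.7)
The plan is to construct each $U_\beta$ directly as the image under the exponential map of the root space $\mathfrak{g}_\beta$, and then to deduce the generation statement (iii) by a Lie-algebra dimension count. The key nonformal input is the characteristic-zero exponential/logarithm correspondence between nilpotent elements of $\mathfrak{g}$ and unipotent elements of $G$.

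For (i), recall that under the adjoint action of $H$ one has the root space decomposition $\mathfrak{g} = \mathfrak{h} \oplus \bigoplus_\beta \mathfrak{g}_\beta$ with each $\mathfrak{g}_\beta$ one-dimensional. Fix a nonzero $X_\beta \in \mathfrak{g}_\beta$. Because $\beta$ is a nonzero $H$-weight, $\ad(X_\beta)$ strictly raises weights and is therefore nilpotent on $\mathfrak{g}$; more generally $X_\beta$ acts nilpotently on every finite-dimensional rational $G$-module, so after a faithful embedding $G \hookrightarrow GL(V)$ it becomes a nilpotent matrix. The series $u_\beta(t) := \exp(tX_\beta) = \sum_{n \ge 0} t^n X_\beta^n / n!$ truncates to a polynomial, giving a morphism $\mathbb{G}_a \to GL(V)$. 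The standard exponential correspondence for reductive groups in characteristic zero then ensures that $u_\beta(\mathbb{G}_a) \subseteq G$, that $U_\beta := u_\beta(\mathbb{G}_a)$ is a closed subgroup, and that $u_\beta$ is an isomorphism of algebraic groups; differentiating at the identity yields $\Lie(U_\beta) = \mathbb{C} X_\beta = \mathfrak{g}_\beta$.

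Statement (ii) is then a small rigidity argument. For a simple root $\alpha$, the pinning has already selected the vector $X_\alpha \in \mathfrak{g}_\alpha$, and we set $u_\alpha(t) := \exp(tX_\alpha)$; since $d\exp|_0 = \Id$, we get $du_\alpha(1) = X_\alpha$. Any other isomorphism $\mathbb{G}_a \to U_\alpha$ has the form $t \mapsto u_\alpha(ct)$ for some $c \in \mathbb{C}^\times$ (as $\Aut(\mathbb{G}_a) = \mathbb{C}^\times$), so the normalization $du_\alpha(1) = X_\alpha$ forces $c = 1$ and pins down $u_\alpha$ uniquely.

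For (iii), let $G' \subseteq G$ be the subgroup generated by $H$ together with all the $U_\beta$. Each generator is closed and connected, and the standard fact that the subgroup of an algebraic group generated by a family of closed connected subgroups is itself closed and connected makes $G'$ a closed connected subgroup of $G$. At the Lie-algebra level, $\Lie(G') \supseteq \mathfrak{h} + \sum_\beta \mathfrak{g}_\beta = \mathfrak{g}$, so $\Lie(G') = \mathfrak{g}$, and the connectedness of $G$ forces $G' = G$. The substantive obstacle is (i): moving from the abstract nilpotence of $X_\beta$ to a genuine closed embedding $\mathbb{G}_a \hookrightarrow G$ is exactly where the exponential correspondence is needed, whereas (ii) and (iii) are essentially formal consequences of the Cartan decomposition once (i) is in hand.
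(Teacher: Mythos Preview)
The paper does not give its own proof of this proposition: it is stated with attribution to Springer and used as input for the subsequent results on pinnings. So there is no argument in the paper to compare against.

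That said, your sketch is a correct and standard characteristic-zero proof. The only step that deserves a word of caution is in (i): asserting that $\exp(tX_\beta)$ lands in $G$ after embedding $G\hookrightarrow GL(V)$ is not purely formal from nilpotence of $X_\beta$ in $\mathfrak{gl}(V)$; one should invoke that over $\mathbb{C}$ the exponential map $\mathfrak{g}\to G$ is intrinsically defined (or that a closed subgroup of $GL(V)$ is recovered from its Lie algebra via the analytic exponential on a neighborhood, and then extend by the one-parameter subgroup property). Once that is granted, your arguments for (ii) via $\Aut(\mathbb{G}_a)=\mathbb{C}^\times$ and for (iii) via closedness/connectedness of the generated subgroup plus the Lie-algebra equality are exactly the expected ones. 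Springer's own proof in \cite{Springer} is framed to work in arbitrary characteristic and so avoids the exponential, instead building $U_\beta$ from the structure theory of the rank-one subgroup $\langle U_\beta, U_{-\beta}\rangle$; your approach is the natural shortcut available over $\mathbb{C}$.
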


\begin{proposition}[Springer {\cite[Proposition 8.2.4 and Corollary 8.2.10]{Springer}}] \label{prop:8.2.4}
Let $G$ be a complex connected reductive algebraic group with pinning $\{ B,H,X_{\alpha} \}$. Let $\widetilde{R}^+$ be an arbitrary system of positive roots in $R(G,H)$, with simple roots $\widetilde\Pi$.
\begin{enumerate}[label=(\roman*)]
    \item $H$ and the $U_{\alpha}$ with $\alpha \in \widetilde\Pi$ generate a Borel subgroup of $G$.
    \item There is a unique $w \in W(G,H)$ with $\widetilde{R}^+ = w.R^+(B,H)$.
\end{enumerate}
\end{proposition}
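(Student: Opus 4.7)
The plan is to prove (ii) first as a statement about the abstract root system $R(G,H)$, and then deduce (i) by transporting the original Borel $B$ via a Weyl group lift.

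For (ii), I would appeal to the standard combinatorial fact that the Weyl group of a root system acts simply transitively on the set of positive systems (equivalently, on Weyl chambers). Since $W(G,H)$ is defined in (\ref{eq:W}) as exactly the Weyl group of the abstract root system $R(G,H) \subset X^*(H)$, and both $R^+(B,H)$ and $\widetilde R^+$ are positive systems there, this immediately yields a unique $w \in W(G,H)$ with $\widetilde R^+ = w \cdot R^+(B,H)$: existence uses transitivity on chambers, and uniqueness uses triviality of the stabilizer of a chamber in $W$.

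For (i), I would lift the element $w$ from (ii) to $\dot w \in \Norm_G(H)$, using the identification $W(G,H) = \Norm_G(H)/H$. Then $\widetilde B := \dot w B \dot w^{-1}$ is a Borel subgroup containing $H$ (since $\dot w$ normalizes $H$), and conjugating the root space decomposition shows that its positive roots relative to $H$ are $w \cdot R^+(B,H) = \widetilde R^+$ with simple roots $\widetilde \Pi$. Because $\dot w U_\alpha \dot w^{-1} = U_{w\alpha}$, showing that $\widetilde B$ is generated by $H$ together with $\{U_{\widetilde \alpha} : \widetilde \alpha \in \widetilde \Pi\}$ reduces to the same statement for the original pinning: that $B$ itself is generated by $H$ and $\{U_\alpha : \alpha \in \Pi(B,H)\}$.

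The main obstacle, and the only nontrivial step, is this last reduction: that the unipotent radical of $B$ is already generated as an algebraic group by the simple root subgroups. On the Lie-algebra side this is transparent, since every positive root is a nonnegative integer combination of simple roots, so the Lie subalgebra generated by $\{\mathfrak g_\alpha : \alpha \in \Pi(B,H)\}$ exhausts $\bigoplus_{\beta \in R^+(B,H)} \mathfrak g_\beta$. To transfer this to the group one needs Chevalley's commutator formula, which realizes each non-simple $U_\beta$ inside iterated commutators of simple root subgroups; this is part of the structure theory developed earlier in Springer and, once invoked, closes the argument.
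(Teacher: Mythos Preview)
The paper does not supply its own proof of this proposition: it is stated with a citation to Springer (Proposition~8.2.4 and Corollary~8.2.10) and then used as input to the subsequent arguments. So there is nothing in the paper to compare your proposal against beyond the bare reference.

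That said, your sketch is a correct outline and is essentially the route Springer takes. Part~(ii) is exactly Springer's Corollary~8.2.10, the simple transitivity of $W$ on positive systems. For part~(i), Springer's Proposition~8.2.4 shows that the subgroup generated by $H$ and the simple root subgroups $U_\alpha$ equals the corresponding Borel, with the key input being the Chevalley commutator relations (his Lemma~8.2.3) used to show every $U_\beta$ for $\beta$ positive lies in the group generated by the simple $U_\alpha$'s. Your reduction via conjugation by a lift $\dot w$ to the case of the original pinning is fine and standard. The only comment is that you could argue~(i) directly for $\widetilde\Pi$ without first proving~(ii) and transporting, since the commutator argument works for any system of simple roots; but your order of deduction is also perfectly valid.
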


\begin{subequations}
    
In this way, we see that the Borel subgroup $B$ in the pinning $\{ B,H,X_{\alpha} \}$ is determined by the $X_{\alpha}$, so we will often use $\{ H, X_{\alpha} \}$ to denote a pinning. 

As for any group, there are natural short exact sequences
\begin{equation}\label{eq:intout}\begin{aligned}
  1 &\longrightarrow \Int(G) \longrightarrow \Aut(G) \buildrel
  p_{\Aut}\over\longrightarrow \Out(G) \longrightarrow 1\\
  & 1 \longrightarrow Z(G) \longrightarrow G \buildrel
  p_G\over\longrightarrow \Int(G) \longrightarrow 1,
\end{aligned}\end{equation}
where $\Aut (G)$ denotes the algebraic automorphisms of $G$;
\begin{equation} \label{eq:int}
    \Int (G) := \{ \Ad (g) \mid g \in G \} = \{ \text{inner automorphisms of } G \};
\end{equation}
and $\Out (G) := \Aut (G) / \Int (G)$. In particular, $\Int (G) \simeq G/Z(G)$.

\end{subequations} 

\begin{proposition} \label{prop:pinned}
Let $G$ be a complex connected reductive algebraic group. The group $\Int (G)$ acts simply transitively on the set of pinnings for $G$.
\end{proposition}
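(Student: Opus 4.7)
The plan is to verify the two required properties separately: transitivity (any two pinnings of $G$ differ by an inner automorphism) and freeness (the stabilizer of a pinning in $\Int(G)$ is trivial).

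For transitivity, I would peel off the three ingredients of a pinning in order. Since any two Borel subgroups of $G$ are $G$-conjugate, after applying a suitable $\Ad(g_1)$ I may assume the two pinnings share a common Borel $B$. Next, any two maximal tori of $B$ are conjugate under the unipotent radical $U$ of $B$, so a further inner automorphism by an element of $U$ reduces me to the case where both pinnings share the same $B$ and $H$. At this point the two pinnings differ only in the choice of basis vectors $X_{\alpha} \in \mathfrak{g}_{\alpha}$ for $\alpha \in \Pi(B,H)$, and $H$ acts on each such datum by $\Ad(h) X_{\alpha} = \alpha(h) X_{\alpha}$. Because the simple roots are linearly independent elements of $X^{*}(H)$, the homomorphism
\[
H \longrightarrow (\mathbb C^{\times})^{\Pi(B,H)}, \qquad h \longmapsto (\alpha(h))_{\alpha \in \Pi(B,H)},
\]
is surjective, so by choosing $h \in H$ appropriately I can match the two sets of $X_{\alpha}$'s. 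This handles transitivity.

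For freeness, I would suppose $\Ad(g)$ fixes a pinning $\{B,H,X_{\alpha}\}$ and work down the same three pieces. Preservation of $B$ forces $g \in N_{G}(B) = B$; preservation of $H$ then forces $g \in N_{B}(H)$. Using the semidirect decomposition $B = H \ltimes U$ and the fact that a unipotent element normalizing $H$ must centralize $H$ (hence be trivial, as $Z_{G}(H) = H$), I get $g \in H$. Fixing each $X_{\alpha}$ then gives $\alpha(g) = 1$ for every simple root $\alpha$, and since every root lies in the $\mathbb Z$-span of the simple roots, $\beta(g) = 1$ for all $\beta \in R(G,H)$. Thus $g$ centralizes every $U_{\beta}$, and together with the fact that $g \in H$ centralizes $H$, Proposition~\ref{prop:pinned}'s predecessor on generators of $G$ implies $g \in Z(G)$, so $\Ad(g)$ is the identity.

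The main obstacle, and the one substantive ingredient not contained in the statements quoted earlier, is the surjectivity of $H \to (\mathbb C^{\times})^{\Pi(B,H)}$: this is the step that makes transitivity on the level of the $X_{\alpha}$'s work, and it rests squarely on the linear independence of the simple roots as characters of $H$. Everything else reduces to standard structural facts about $B$, $N_{G}(H)$, and the description of $Z(G)$ as the common kernel of the roots, and I expect those to run routinely.
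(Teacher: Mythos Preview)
Your proof is correct and follows essentially the same three-step peel-off strategy as the paper: conjugate the Borels, then the tori, then use linear independence of the simple roots to adjust the $X_\alpha$'s by an element of $H$; for freeness, reduce to $g\in H$ and then to $g\in Z(G)$ via $\alpha(g)=1$. The only cosmetic difference is that for the reduction to $g\in H$ you pass through $N_G(B)=B$ and the decomposition $B=H\ltimes U$, whereas the paper passes through $N_G(H)$ and observes that the induced Weyl group element fixes the simple roots and hence is trivial.
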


\begin{proof}
Let $\{ B, H, X_{\alpha} \}$ and $\{ B', H', X_{\alpha'}' \}$ be two pinnings for $G$, and let $\Pi$ and $\Pi'$ denote the corresponding sets of simple roots. Then by \cite[Theorem 6.2.7]{Springer}, $B = gB'g^{-1}$ for some $g \in G$. Additionally, $gH'g^{-1}$ is a maximal torus of $gB'g^{-1} = B$, so we have that $H = bgH'g^{-1}b^{-1}$ for some $b \in B$ by \cite[Theorem 6.4.1]{Springer}. Setting $\{ X_{\alpha''}'' \} := \Ad (bg)(\{ X_{\alpha'}' \})$ (and letting $\Pi''$ denote the corresponding set of simple roots), we see that 
$$\Ad (bg)( \{ B', H', X_{\alpha'}' \} ) = \{ B, H, X_{\alpha''}'' \}.$$ 
Since a unique set of simple roots is determined by the pair $(B,H)$, we get that $\Pi = \Pi''$. Now, we claim that there exists $e^Y \in H$ such that $\Ad (e^Y) ( X_{\alpha}'' ) = X_{\alpha}$ for all $\alpha \in \Pi$. Since $X_\alpha , X_{\alpha}'' \in \mathfrak{g}_{\alpha}$, we have that $X_{\alpha} = c_\alpha X_\alpha''$ for some scalar $c_\alpha$. Therefore, our desired $e^Y \in H$ is a solution to the following system of equations:
$$ \Ad (e^Y) (X_\alpha'') = e^{\alpha (Y)} X_{\alpha}'' = c_{\alpha} X_{\alpha}'' \hspace{.5cm} \text{ for all } \alpha \in \Pi .$$
Since the simple roots are linearly independent (see \cite[Theorem 8.2.8]{Springer}), this system has a solution $h:=e^Y \in H$. Then $\Ad(hbg)(\{ H',X_{\alpha'}' \}) = \{ H,X_{\alpha} \}$, which proves that the action of $\Int (G)$ is transitive.

To see that this action is \textit{simply} transitive, suppose that 
$$\Ad(g)(\{ B,H,X_{\alpha }\}) = \Ad (g')(\{ B, H,X_{\alpha} \})$$
for some $g,g' \in G$. Then
$$(g')^{-1} g \in N_G (H) \hspace{.5cm} \text{ and } \hspace{.5cm} \Ad ((g')^{-1}g)(\{ X_{\alpha} \}) = \{ X_{\alpha} \},$$ 
so Proposition \ref{prop:8.2.4} gives that $(g')^{-1}g \cdot H = 1_{W(G,H)}$. In other words, $(g')^{-1}g \in H$. But then Proposition \ref{prop:8.1.1} and the relation 
$$\Ad ((g')^{-1}g)(\{ X_{\alpha} \}) = \{ X_{\alpha} \}$$ 
show that $\Ad ((g')^{-1}g)(X_{\alpha}) = X_{\alpha}$ for all $\alpha \in \Pi (B,H)$. It follows that $(g')^{-1}g \in Z(G)$, completing the proof.
\end{proof}

\begin{subequations} \label{se:disc1}

The {\em root datum of $G$} is the quadruple
\begin{equation}\label{eq:rootdata2}
{\mathcal R}(G) := (X^*(H),R(G,H),X_*(H),R^\vee(G,H))
\end{equation}
and the {\em based root datum} is
\begin{equation}\label{eq:basedrootdata}
{\mathcal B}(G) := (X^*(H),\Pi(B,H),X_*(H),\Pi^\vee(B,H)).
\end{equation}
It is worth noting that Proposition \ref{prop:pinned} is essentially Chevalley's theorem that every reductive algebraic group has an associated root datum, and the root datum determines the group up to isomorphism \cite{Chev}.

With these notions established, define 
\begin{equation}\label{eq:intout0}
    \Aut({\mathcal B}(G)) := \left\{T\in \Aut(X^*(H)) \mid \parbox{.35\textwidth}{$\ \ T(\Pi(B,H)) =
  \Pi(B,H)$\\$ \ {}^t T(\Pi^\vee(B,H)) = \Pi^\vee(B,H)$} \right\}   
\end{equation}
and
\begin{equation}
        \Aut(G,\{H,X_\alpha\}) := \{\tau\in \Aut(G) \mid \tau(\{H,
    X_\alpha\}) = \{H,X_\alpha \} \},
\end{equation}
\end{subequations} 
where by $\tau(\{H, X_\alpha\}) = \{H,X_\alpha \}$, we mean that $\tau (H) = H$ and that $\tau$ preserves the set $\{ X_{\alpha} \}$ (not necessarily pointwise). Then Proposition \ref{prop:pinned} implies the following:

\begin{corollary} 
  Let $G$ be a complex connected reductive algebraic group with pinning $\{ H,X_{\alpha} \}$. An automorphism of $G$ preserving the pinning is precisely the same thing as an automorphism of the based root datum:
$$\Aut ({\mathcal B}(G)) = \Aut (G, \{ H,X_{\alpha} \}) .$$
\end{corollary}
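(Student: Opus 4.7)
The plan is to define a natural restriction map $\rho : \Aut(G,\{H,X_\alpha\}) \to \Aut(\mathcal B(G))$ and verify it is bijective. Given $\tau$ preserving the pinning, $\tau|_H$ is an algebraic automorphism of $H$, and so induces $\rho(\tau) \in \Aut(X^*(H))$ by pullback on characters. Since $\tau$ permutes the root spaces $\mathfrak g_\beta$ (because $\tau(H) = H$) and fixes the pinning collection $\{X_\alpha\}$, it permutes the simple root spaces, so $\rho(\tau)$ preserves $\Pi(B,H)$; the dual assertion on $X_*(H)$ shows it also preserves $\Pi^\vee(B,H)$. Hence $\rho$ lands in $\Aut(\mathcal B(G))$.

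For injectivity, suppose $\rho(\tau) = \Id$. Then $\tau|_H = \Id$ (characters separate points of $H$), and $\tau(U_\beta) = U_\beta$ for every root $\beta$. For each simple $\alpha$, the composition $\tau \circ u_\alpha : \mathbb G_a \to U_\alpha$ has differential $X_\alpha$ at $1$, so the uniqueness clause in the first cited Springer proposition forces $\tau \circ u_\alpha = u_\alpha$, whence $\tau|_{U_\alpha} = \Id$. For the negative simple root subgroup, pick $Y_\alpha \in \mathfrak g_{-\alpha}$ with $[X_\alpha,Y_\alpha] \neq 0$: since $\tau$ acts on $\mathfrak g_{-\alpha}$ by some scalar $c_{-\alpha}$ while fixing $\mathfrak h$ pointwise, the identity $\tau([X_\alpha,Y_\alpha]) = [X_\alpha,Y_\alpha]$ forces $c_{-\alpha} = 1$, so $\tau|_{U_{-\alpha}} = \Id$ as well. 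Since $H$ together with the $U_{\pm \alpha}$ for simple $\alpha$ generates the connected group $G$ (their Lie algebras generate $\mathfrak g$), we conclude $\tau = \Id$.

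For surjectivity, given $T \in \Aut(\mathcal B(G))$, I would invoke the existence half of Chevalley's isomorphism theorem (the content alluded to just before the corollary) to produce $\tau' \in \Aut(G)$ with $\tau'(H) = H$ inducing $T$ on $X^*(H)$. Since $T$ preserves $R^+(B,H)$, the automorphism $\tau'$ automatically preserves $B$. There remains only a choice of scalars: $\tau'(X_\alpha) \in \mathfrak g_{T\alpha}$ equals $c_\alpha X_{T\alpha}$ for some $c_\alpha \in \mathbb C^*$. Because the simple roots are $\mathbb Z$-linearly independent in $X^*(H)$ and $\mathbb C^*$ is divisible, the evaluation map $H \to (\mathbb C^*)^{\Pi(B,H)}$, $h \mapsto (\beta(h))_{\beta \in \Pi(B,H)}$, is surjective, so I can find $h \in H$ with $(T\alpha)(h) = c_\alpha^{-1}$ for every simple $\alpha$; then $\Ad(h)\circ\tau' \in \Aut(G,\{H,X_\alpha\})$ still maps to $T$, establishing surjectivity.

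The main obstacle, as I see it, is surjectivity: Proposition \ref{prop:pinned} alone governs the $\Int(G)$-action on pinnings but cannot manufacture an automorphism of $G$ from an abstract automorphism of $\mathcal B(G)$. That existence step genuinely requires the isomorphism theorem of Chevalley; once that is available, the scalar adjustment by an element of $H$ is routine, and injectivity reduces to the standard structural facts about $G$ and its root subgroups already recorded above.
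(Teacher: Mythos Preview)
Your argument is correct. The paper does not actually supply a proof of this corollary: it simply writes ``Then Proposition~\ref{prop:pinned} implies the following'' and states the result, having just remarked that Proposition~\ref{prop:pinned} ``is essentially Chevalley's theorem.'' Your write-up makes explicit exactly what the paper is leaving implicit: the restriction-to-$H$ map furnishes the identification, injectivity comes from the generation of $G$ by $H$ and the simple root subgroups together with the bracket relation $[X_\alpha,Y_\alpha]\in\mathfrak h$, and surjectivity requires the existence half of Chevalley's isomorphism theorem followed by the same $\Ad(h)$ adjustment (linear independence of simple roots) that appears in the paper's proof of Proposition~\ref{prop:pinned}. Your closing diagnosis is on the mark: Proposition~\ref{prop:pinned} by itself only controls the $\Int(G)$-action on pinnings and cannot produce an automorphism of $G$ from an abstract $T\in\Aut(\mathcal B(G))$; the paper is tacitly folding Chevalley's theorem into the citation.
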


Elements of $\Aut ( {\mathcal B}(G) ) = \Aut (G, \{ H, X_{\alpha} \})$ are called \textit{distinguished} automorphisms of $G$.

\begin{corollary} \label{cor:aut=int-dist}
Let $G$ be a complex connected reductive algebraic group with pinning $\{ H,X_{\alpha} \}$. The group of algebraic automorphisms of $G$ is the semidirect product of the inner automorphisms and the distinguished automorphisms:
  $$\Aut(G) = \Int(G) \rtimes \Aut(G,\{H,X_\alpha\}).$$
  Consequently,
  $$ \Out(G) \simeq \Aut(G,\{H,X_\alpha\}) = \Aut({\mathcal B}(G)) .$$
\end{corollary}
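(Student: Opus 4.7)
The plan is to use Proposition \ref{prop:pinned} (simple transitivity of $\Int(G)$ on pinnings) as the single engine driving everything. The first step is to observe that any algebraic automorphism $\tau \in \Aut(G)$ carries a pinning to a pinning: if $\{H, X_\alpha\}$ is a pinning, then $\tau(\{H, X_\alpha\}) = \{\tau(H), d\tau(X_\alpha)\}$ is one as well, since $\tau$ preserves all the defining structural data (maximal tori, Borels via the characterization in the previous proposition, and root space decomposition).

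Next I would apply Proposition \ref{prop:pinned} to produce, for each $\tau \in \Aut(G)$, a unique $\Ad(g) \in \Int(G)$ such that $\Ad(g) \circ \tau(\{H, X_\alpha\}) = \{H, X_\alpha\}$. Setting $\sigma := \Ad(g)\circ \tau$, we then have $\sigma \in \Aut(G, \{H, X_\alpha\})$ and the factorization $\tau = \Ad(g^{-1}) \circ \sigma$ realizes $\tau$ as an inner automorphism composed with a distinguished one. This proves $\Aut(G) = \Int(G)\cdot \Aut(G, \{H, X_\alpha\})$ as a product of subgroups.

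For the semidirect product structure there are two remaining ingredients. First, $\Int(G)$ is a normal subgroup of $\Aut(G)$: for any $\tau \in \Aut(G)$ and any $g \in G$, one has $\tau \circ \Ad(g) \circ \tau^{-1} = \Ad(\tau(g))$, which lies in $\Int(G)$. Second, the intersection $\Int(G) \cap \Aut(G, \{H, X_\alpha\})$ is trivial: an element of this intersection is an inner automorphism which fixes the pinning $\{H, X_\alpha\}$, and by the \emph{simple} transitivity part of Proposition \ref{prop:pinned} the only such element is the identity. The potential obstacle here — matching the set-theoretic factorization with the group-theoretic semidirect product definition — is actually immediate from these two facts together with the previous paragraph.

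Finally, the claim about $\Out(G)$ follows formally: from the semidirect decomposition, the quotient map $\Aut(G) \twoheadrightarrow \Aut(G)/\Int(G) = \Out(G)$ restricts to an isomorphism on the complementary subgroup $\Aut(G, \{H, X_\alpha\})$, and the latter has already been identified with $\Aut({\mathcal B}(G))$ by the preceding corollary. No calculation is required beyond invoking these earlier results in sequence.
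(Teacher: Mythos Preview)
Your proof is correct and follows essentially the same route as the paper: both use Proposition~\ref{prop:pinned} to move an arbitrary automorphism back to one fixing the pinning, then invoke simple transitivity for the trivial-intersection step. The only cosmetic difference is that the paper packages the factorization as a retraction homomorphism $\tau \mapsto \Ad(g_\tau)\circ\tau$ with kernel $\Int(G)$, whereas you verify the semidirect product axioms (product, normality, trivial intersection) directly.
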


\begin{proof}
Let $\{ B,H,X_{\alpha} \}$ be a pinning of $G$. Proposition \ref{prop:pinned} implies that 
$$\Int (G)\, \cap \,\Aut (G , \{ H,X_{\alpha} \}) = \{ 1 \}.$$ 
To see that $\Int (G)$ and $\Aut (G, \{ H,X_{\alpha} \})$ generate $\Aut (G)$, we show that there is a surjective homomorphism $\Aut (G) \rightarrow \Aut (G , \{ H,X_{\alpha} \})$ with kernel $\Int (G)$. To this end, let $\tau \in \Aut (G)$. Then $\{ \tau(B), \tau(H), \tau(X_\alpha) \}$ is a pinning of $G$, where $\tau (X_\alpha) := d(\tau \circ u_\alpha)(1)$. Therefore, by Proposition \ref{prop:pinned}, there exists a unique element $\Ad(g_{\tau}) \in \Int(G)$ such that $\Ad(g_{\tau})( \tau (H)) = H$, $\Ad(g_{\tau})( \tau (B)) = B$, and $\Ad(g_\tau)( \tau (X_\alpha)) = X_\alpha$ for all $\alpha \in \Pi (B,H)$. Then $\Ad (g_{\tau}) \circ \tau \in \Aut (G, \{ H,X_{\alpha} \})$, and we have a well-defined homomorphism
\begin{align*}
    \Aut (G) & \rightarrow \Aut (G,\{ H,X_{\alpha} \}) \\
    \tau & \mapsto \Ad (g_{\tau}) \circ \tau .
\end{align*}
It is straightforward to check that this homomorphism has kernel $\Int (G)$, as desired.
\end{proof}

\section{Disconnected reductive algebraic groups} \label{sec:disconnected}
\setcounter{equation}{0}

\begin{subequations}\label{se:disc2}
We can now describe the possible disconnected groups $E$ as in
\eqref{eq:disc1}. We will take as given the connected complex reductive
algebraic group $G$ as in Section \ref{sec:connected}, specified by the based root
datum ${\mathcal B}(G)$. We fix also a finite group $\Gamma$, which
may be specified in any convenient fashion. If we are to have a short
exact sequence as in \eqref{eq:disc1}, we will get automatically a
group homomorphism
\begin{equation}\label{eq:outer}
  \Ad \colon \Gamma \rightarrow \Out(G) \simeq \Aut (G, \{ H,X_{\alpha} \}) = \Aut({\mathcal B}(G)).
\end{equation}
We will take the specification of such a homomorphism $\Ad$
as part of the data (along with $G$ and $\Gamma$) which we are given.

For any group $G$, the inner automorphisms act trivially on the center
$Z(G)$, so there is a natural homomorphism
\begin{equation}
  \Ad\colon \Out(G) \rightarrow \Aut(Z(G)).
\end{equation}
In our setting, \eqref{eq:outer} therefore gives
\begin{equation}\label{eq:outerZ}
  \overline{\Ad}\colon \Gamma \rightarrow \Aut(Z(G)).
\end{equation}

We pause briefly to recall what sort of group is $\Aut({\mathcal
  B}(G))$. Recall that the set of simple roots $\Pi(B,H)$ is the set
of vertices of a graph $\Dynkin(G)$ with some directed multiple edges, the
{\em Dynkin diagram} of $G$: an edge joins distinct vertices $\alpha$
and $\alpha'$ if and only if $\langle (\alpha')^\vee,\alpha\rangle \ne 0$. 
We will not make further use of the Dynkin diagram, so we do not recall the details. If $Z(G)$
has dimension $m$, then
\begin{equation} \label{eq:out}
  \Aut({\mathcal B}(G)) \subset \Aut(\Dynkin(G)) \times \Aut({\mathbb
    Z}^m).
\end{equation}
Here the first factor is a finite group of ``diagram automorphisms'' and the second is the discrete group of $m\times m$ integer matrices
of determinant $\pm 1$. To see why we have this containment, recall that $\Aut ( {\mathcal B} (G) )$ is the set of automorphisms of the character lattice $X^* (H)$ preserving the set of simple roots. The character lattice $X^*(H)$ has a sublattice of finite index
\begin{equation}\label{eq:almost_product}
    X^* (H) \supset X^*( H/Z(G)) \times X^* (H/(H\cap [G,G]))  \simeq \mathbb{Z}^{\dim (H) - m} \times \mathbb{Z}^{m} 
\end{equation}
which must be preserved by any automorphism of the root datum. The automorphisms of $X^*(H/Z(G))$ preserving the set of simple roots are precisely the diagram automorphisms. If $L_0 \subset L$ is a sublattice of finite index, restriction to $L_0$ defines an inclusion
$$
\Aut(L) \hookrightarrow \Aut(L_0),
$$
identifying $\Aut(L)$ as a subgroup of finite index in $\Aut(L_0)$. It follows that the inclusion \eqref{eq:out} induced by \eqref{eq:almost_product} is of finite index.

The automorphism group of a connected Dynkin diagram is small (order
one or two except in the case of $D_4$, where the automorphism group
is $S_3$). So for semisimple $G$ the possibilities for the
homomorphism $\Ad$ are generally quite limited, and can be described
concretely and explicitly. Understanding possible maps to
$\Aut({\mathbb Z}^m)$ means understanding $m$-dimensional
representations of $\Gamma$ over ${\mathbb Z}$. This is a more subtle
and complicated subject; we will not concern ourselves with it,
merely taking $\Ad$ as given somehow.

With this information in hand, the quotient group $E/Z(G)$ can now be
completely described. Define (using $p_{\Aut}$ from \eqref{eq:intout}
and $\Ad$ from \eqref{eq:outer})
\begin{equation}
  \Aut_{\Gamma}(G) := p_{\Aut}^{-1}(\Ad(\Gamma)).
\end{equation}
We immediately get from \eqref{eq:intout} a short exact sequence
\begin{equation}
  1 \longrightarrow \Int(G) \longrightarrow \Aut_\Gamma(G)
  \longrightarrow \Ad(\Gamma) \longrightarrow 1.
  \end{equation}
Moreover, Corollary \ref{cor:aut=int-dist} implies that 
\begin{equation} \label{eq:Aut_Gamma-semidirect}
    \Aut_{\Gamma}(G) = \Int (G) \rtimes \Ad (\Gamma).
\end{equation}
The identity component $G$ is a distinguished subgroup of $E$ (i.e.,~is preserved by any automorphism), so we have a map $\Ad \colon E \rightarrow \Aut (G)$. Note that 
\begin{equation} \label{eq:Ad(E)}
    \Ad (E) = \Aut_{\Gamma} (G).
\end{equation}
Now, for a pinning $\{ H,X_{\alpha} \}$ of $G$, define 
\begin{equation}
    E (\{ H, X_{\alpha} \}) := \{ e \in E \mid \Ad (e) ( \{ H,X_{\alpha} \}) = \{ H,X_{\alpha} \} \},
\end{equation}
the subgroup of $E$ defining distinguished automorphisms of $G$ (see
  \eqref{eq:intout0}).
\end{subequations}

\begin{proposition} \label{prop:E(H,X)/Z(G)->Gamma}
Let $E$ be a complex disconnected reductive algebraic group with identity component $G$. Let $\{ H,X_{\alpha} \}$ be a pinning of $G$. 
\begin{enumerate}[label=(\roman*)]
    \item $G \cap E(\{ H,X_{\alpha} \}) = Z(G)$.
    \item The map $\overline{p}_E \colon E(\{ H,X_{\alpha} \})/Z(G) \rightarrow \Gamma$ is an isomorphism.
\end{enumerate}
\end{proposition}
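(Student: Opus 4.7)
The plan is to leverage the simple transitivity of $\Int(G)$ on pinnings (Proposition \ref{prop:pinned}) for both parts. First, I would observe that $\overline{p}_E$ is even well-defined: since every element of $Z(G)$ acts trivially by $\Ad$, certainly $Z(G) \subset E(\{H,X_\alpha\})$, and moreover $p_E(Z(G)) = 1$.

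For part (i), the inclusion $Z(G) \subset G \cap E(\{H,X_\alpha\})$ is immediate from the same observation. Conversely, if $g \in G$ satisfies $\Ad(g)(\{H,X_\alpha\}) = \{H,X_\alpha\}$, then $\Ad(g) \in \Int(G)$ fixes the pinning, so by the simple transitivity in Proposition \ref{prop:pinned}, $\Ad(g) = \Ad(1)$, giving $g \in Z(G)$.

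For part (ii), injectivity is automatic from part (i): the kernel of $p_E$ restricted to $E(\{H,X_\alpha\})$ is $G \cap E(\{H,X_\alpha\}) = Z(G)$. For surjectivity, given any $\gamma \in \Gamma$, I would choose any lift $e_0 \in p_E^{-1}(\gamma) \subset E$. Then $\Ad(e_0) \in \Aut(G)$ (since $G$ is normal in $E$), so $\Ad(e_0)(\{H,X_\alpha\})$ is a new pinning of $G$. Applying Proposition \ref{prop:pinned} to this pair of pinnings, I can find $g \in G$ with $\Ad(g) \cdot \Ad(e_0)(\{H,X_\alpha\}) = \{H,X_\alpha\}$, i.e., $ge_0 \in E(\{H,X_\alpha\})$. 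Since $g \in G$, we still have $p_E(ge_0) = p_E(e_0) = \gamma$, as needed.

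There is really no significant obstacle; the whole statement is essentially a repackaging of Proposition \ref{prop:pinned} in the disconnected setting. The only point requiring mild care is the verification that $\Ad(e_0)$ is a genuine algebraic automorphism of $G$ (so that its image of a pinning is again a pinning) — this follows from $G \triangleleft E$ and the fact that conjugation by a fixed element of the algebraic group $E$ is algebraic.
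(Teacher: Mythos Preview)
Your proof is correct and follows essentially the same strategy as the paper: part (i) and the injectivity in part (ii) are verbatim the paper's arguments. For surjectivity the paper takes a slightly less direct route, first invoking the semidirect product decomposition $\Aut_\Gamma(G) = \Int(G)\rtimes \Ad(\Gamma)$ (equation \eqref{eq:Aut_Gamma-semidirect}, which comes from Corollary \ref{cor:aut=int-dist}) to identify $\Ad(E(\{H,X_\alpha\})) = \Ad(\Gamma)$, and then arguing that a suitable $e'$ lies in the given coset $eG$. Your argument bypasses this by applying the transitivity of Proposition \ref{prop:pinned} directly to correct an arbitrary lift $e_0$ by an element of $G$; this is cleaner and avoids the detour through Corollary \ref{cor:aut=int-dist}, though of course both approaches ultimately rest on Proposition \ref{prop:pinned}.
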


\begin{proof}
The inclusion $Z(G) \subseteq G \cap E(\{ H,X_{\alpha} \})$ is clear. For the other, suppose that $g \in G \cap E(\{ H,X_{\alpha} \})$. Then 
$$\Ad (g) \in \Int (g) \cap \Aut (G, \{ H,X_{\alpha} \}) = \{ 1 \},$$
where we have used Proposition \ref{prop:pinned}. It follows that $g \in Z(G)$, which proves (i).

Next, we claim that $\overline{p}_E$ is injective. To this end, let $e \in E(\{ H,X_{\alpha} \})$ and suppose that $\overline{p}_E ( e Z(G) ) = \text{id}_{\Gamma}$ (i.e.,~that $e  G = \text{id}_{\Gamma}$). Then 
$$e \in G \cap E(\{ H,X_{\alpha} \}) = Z(G),$$ 
and injectivity follows. For surjectivity, note that
\begin{align*}
    \Ad (E (\{ H,X_{\alpha} \}) ) &= \Ad (E) \cap \Aut (G,\{ H,X_{\alpha} \}) \\
    &= [ \Int (G) \rtimes \Ad (\Gamma ) ] \cap \Aut (G, \{ H,X_{\alpha} \}) \\
    &= \Ad (\Gamma),
\end{align*}
where we have used \eqref{eq:Aut_Gamma-semidirect} and Proposition \ref{prop:pinned}. Therefore, given a coset $eG \in \Gamma$, we can write $\Ad (eG) = \Ad (e')$ for some $e' \in E(\{ H,X_{\alpha} \})$. Explicitly, this means that $p_{\Aut }(\Ad (e)) = \Ad (e')$, which gives that $\Ad (e)^{-1} \circ \Ad (e') \in \Int (G)$, and hence that $e' \in eG$. In particular, 
$$ \overline{p}_E ( e' Z(G) ) = e G \in \Gamma ,$$
which proves (ii).
\end{proof}

Proposition \ref{prop:E(H,X)/Z(G)->Gamma} immediately implies the following:

\begin{corollary}
Let $E$ be a complex disconnected reductive algebraic group with identity component $G$. Then
$$E/Z(G) \simeq [G/Z(G)] \rtimes \Gamma ,$$
and $E(\{ H,X_{\alpha} \})$ is an extension of $\Gamma$ by the abelian group $Z(G)$:
\begin{equation} \label{eq:E(H,X)-SES}
    1 \longrightarrow Z(G) \longrightarrow E(\{ H,X_{\alpha} \}) \longrightarrow \Gamma \longrightarrow 1.
\end{equation}
\end{corollary}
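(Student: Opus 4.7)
The plan is to derive both assertions directly from Proposition \ref{prop:E(H,X)/Z(G)->Gamma}, treating the short exact sequence \eqref{eq:E(H,X)-SES} as essentially immediate and then assembling the semidirect product in $E/Z(G)$ from its two pieces.

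First I would handle the short exact sequence. The inclusion $Z(G) \subseteq E(\{H,X_\alpha\})$ is automatic: elements of $Z(G)$ act trivially by $\Ad$, so in particular they preserve the pinning. The kernel of $\overline{p}_E$ in part (ii) of Proposition \ref{prop:E(H,X)/Z(G)->Gamma} is trivial, so $Z(G)$ is precisely the preimage of $\{1\}$ under $p_E|_{E(\{H,X_\alpha\})}$; hence the sequence
\[
1 \longrightarrow Z(G) \longrightarrow E(\{H,X_\alpha\}) \longrightarrow \Gamma \longrightarrow 1
\]
is exact. Normality of $Z(G)$ in $E(\{H,X_\alpha\})$ comes from the fact that $Z(G)$ is characteristic in $G$ and $G$ is normal in $E$, hence $Z(G)$ is normal in $E$.

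For the first assertion, I would pass to the quotient $E/Z(G)$ (which is well-defined since $Z(G) \trianglelefteq E$) and identify two subgroups inside it: the image of $G$, namely $G/Z(G) \simeq \Int(G)$, and the image of $E(\{H,X_\alpha\})$, namely $E(\{H,X_\alpha\})/Z(G) \simeq \Gamma$ by part (ii) of the Proposition. Three checks produce the semidirect product:
\begin{enumerate}[label=(\alph*)]
\item Their intersection is trivial: an element in the intersection lifts to some $g \in G \cap E(\{H,X_\alpha\})$, and this lies in $Z(G)$ by part (i) of Proposition \ref{prop:E(H,X)/Z(G)->Gamma}.
\item The two subgroups generate $E/Z(G)$: given $e \in E$, the surjectivity established in the proof of the Proposition yields $e' \in E(\{H,X_\alpha\})$ with $eG = e'G$, whence $e = ge'$ for some $g \in G$ and $eZ(G) = (gZ(G))(e'Z(G))$.
\item $G/Z(G)$ is normal in $E/Z(G)$ because $G$ is normal in $E$.
\end{enumerate}
Combining these gives $E/Z(G) = (G/Z(G)) \rtimes (E(\{H,X_\alpha\})/Z(G)) \simeq [G/Z(G)] \rtimes \Gamma$, as desired.

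There is no real obstacle here; the content of the corollary is entirely contained in Proposition \ref{prop:E(H,X)/Z(G)->Gamma}, and the only thing to be careful about is that the semidirect product decomposition is taking place at the level of $E/Z(G)$ (not $E$ itself), where the second factor $\Gamma$ lifts canonically via $E(\{H,X_\alpha\})/Z(G)$.
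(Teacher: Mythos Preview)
Your proposal is correct and matches the paper's approach exactly: the paper simply asserts that the corollary follows immediately from Proposition~\ref{prop:E(H,X)/Z(G)->Gamma} and gives no further argument, so you have just written out the details that the paper leaves implicit. The only minor point worth tightening is in step~(a): when you say an element in the intersection ``lifts to some $g \in G \cap E(\{H,X_\alpha\})$,'' you are implicitly using that $Z(G) \subseteq E(\{H,X_\alpha\})$ so that a coset in $E(\{H,X_\alpha\})/Z(G)$ which also lies in $G/Z(G)$ has a representative in $G$ that still belongs to $E(\{H,X_\alpha\})$; this is true and you noted it earlier, but it is the one place a reader might pause.
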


With this established, we can now summarize the relationship between many of the short exact sequences in this paper; in the following diagram, the row and column labels indicate where the corresponding sequences and maps appear in the paper:

\vspace{1.5em}

\begin{center}
\begin{tikzcd}
    1 \arrow[r] & Z(G) \arrow[r,hook]  \arrow[d, hook] & E(\{H, X_\alpha \}) \arrow[r,two heads]  \arrow[d, hook] & \Gamma \arrow[r] \arrow[d,equal] & 1 & \hspace{-.75cm}\text{(3.2)} \\
    1 \arrow[r] & G \arrow[r,hook] \arrow[d,two heads,"p_G"] & E \arrow[r,two heads, "p_E"] \arrow[d,"\Ad"] & \Gamma \arrow[r] \arrow[d,"\Ad"] & 1 & \hspace{-.75cm} \text{(1.1)} \\
    1 \arrow[r] & \Int(G) \arrow[r,hook] \arrow[d,equal] & \Aut_{\Gamma} (G) \arrow[r,two heads] \arrow[d,hook] & \Ad (\Gamma) \arrow[r] \arrow[d,hook] & 1 & \hspace{-.75cm} \text{(3.1g)} \\
    1 \arrow[r] & \Int(G) \arrow[r,hook]  & \Aut (G) \arrow[r,two heads,"p_{\Aut}"] & \Out(G) \arrow[r] & 1 & \hspace{-.75cm} \text{(2.2a)} \\ [-15pt]
    & \text{(2.2a)} & \text{(3.1i)} & \text{(3.1a)} & &  \\
\end{tikzcd}
\end{center}

We are now ready to prove the main result of the paper, Theorem \ref{thm:listE} (summarized above in Theorem \ref{thm:main}). This theorem shows that the problem of understanding extensions of $\Gamma$ by $G$ can be reduced to the problem of understanding the extensions \eqref{eq:E(H,X)-SES} of $\Gamma$ by $Z(G)$.

\begin{theorem}\label{thm:listE}
Let $E$ be a complex disconnected reductive algebraic group with identity component $G$. Let $\{ H,X_{\alpha} \}$ be a pinning of $G$.
  \begin{enumerate}[label=(\roman*)]
  \item There is a natural surjective homomorphism
    $$ G\rtimes E(\{H,X_\alpha\}) \rightarrow E$$
      with kernel the antidiagonal copy
      $$Z(G)_{-\Delta} = \{(z,z^{-1})\mid z \in Z(G)\}$$
    of $Z(G)$. Consequently, there is a natural bijection between algebraic extensions $E$
        of $\Gamma$ by $G$ and algebraic extensions $E(\{H,X_\alpha\})$
        of $\Gamma$ by $Z(G)$ (with the action \eqref{eq:outerZ} of $\Gamma$ on $Z(G)$. These latter are parametrized by the group cohomology
        $$H^2(\Gamma,Z(G))$$
 (see for example \cite[pages 299--303]{CE}). The bijection is
        given by
        $$ E = \left[G\rtimes E(\{H,X_\alpha\})\right]/Z(G)_{-\Delta}.$$
      \item Define
        $$Z(G)_\fin = \text{elements of finite order in $Z(G)$,}$$
        the torsion subgroup. Then the natural map
        $$H^p(\Gamma,Z(G)_\fin) \rightarrow H^p(\Gamma,Z(G)) \qquad (p\ge 2)$$
        is an isomorphism.
  \end{enumerate}
\end{theorem}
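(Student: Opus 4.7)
For part (i), the plan is to define the multiplication map
$$\phi\colon G \rtimes E(\{H,X_\alpha\}) \longrightarrow E, \qquad (g,e)\longmapsto ge,$$
where the semidirect product uses the conjugation action of $E(\{H,X_\alpha\}) \subseteq E$ on the normal subgroup $G$. A direct calculation, using $eg'e^{-1} \in G$, shows $\phi$ is a group homomorphism. Surjectivity then follows immediately from Proposition \ref{prop:E(H,X)/Z(G)->Gamma}(ii): given $e\in E$, the coset $eG$ equals $e'G$ for some $e' \in E(\{H,X_\alpha\})$, so $e = g e'$ with $g\in G$. To compute $\ker\phi$, note that $ge=1$ forces $e = g^{-1} \in G \cap E(\{H,X_\alpha\})$, which equals $Z(G)$ by Proposition \ref{prop:E(H,X)/Z(G)->Gamma}(i); this is precisely the antidiagonal $Z(G)_{-\Delta}$.

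For the bijection between extensions, the forward direction sends $E$ to its pinning stabilizer $E(\{H,X_\alpha\})$, which is an extension of $\Gamma$ by $Z(G)$ by the preceding corollary. Conversely, given any algebraic extension $\widetilde{E}$ of $\Gamma$ by $Z(G)$ compatible with the action \eqref{eq:outerZ}, one forms $G \rtimes \widetilde{E}$ (using the composite $\widetilde{E} \twoheadrightarrow \Gamma \xrightarrow{\Ad} \Aut(G,\{H,X_\alpha\}) \subseteq \Aut(G)$) and quotients by the central antidiagonal copy of $Z(G)$; the quotient is an algebraic extension of $\Gamma$ by $G$ whose pinning stabilizer recovers $\widetilde{E}$. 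Once the two constructions are checked to be mutually inverse, the parametrization by $H^2(\Gamma,Z(G))$ is the standard classification of abelian extensions of $\Gamma$ by $Z(G)$ with fixed $\Gamma$-action.

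For part (ii), the plan is to use the short exact sequence of $\Gamma$-modules
$$1 \longrightarrow Z(G)_\fin \longrightarrow Z(G) \longrightarrow Z(G)/Z(G)_\fin \longrightarrow 1$$
and its long exact sequence in group cohomology. The key structural input is that $Z(G)/Z(G)_\fin$ is a $\mathbb{Q}$-vector space: writing the complex diagonalizable group $Z(G)$ as $(\mathbb{C}^\times)^m \times F$ for some finite abelian $F$, modding by torsion kills $F$ entirely and replaces each $\mathbb{C}^\times \cong \mathbb{R} \oplus \mathbb{R}/\mathbb{Z}$ by $\mathbb{R} \oplus \mathbb{R}/\mathbb{Q}$, which is divisible and torsion-free. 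For finite $\Gamma$ and any $\mathbb{Q}[\Gamma]$-module $V$, $H^p(\Gamma,V) = 0$ for all $p \geq 1$ (since $|\Gamma|$ annihilates $H^p(\Gamma,V)$ but acts invertibly on $V$). For $p \geq 2$, both $H^{p-1}(\Gamma,Z(G)/Z(G)_\fin)$ and $H^p(\Gamma,Z(G)/Z(G)_\fin)$ vanish, so the long exact sequence delivers the desired isomorphism.

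The main obstacle is in part (i): articulating carefully enough that the reverse construction from an abstract extension of $\Gamma$ by $Z(G)$ produces a genuine \emph{algebraic} extension of $\Gamma$ by $G$, and that the forward and reverse passages are mutually inverse as algebraic extensions rather than merely as abstract group extensions. Part (ii) reduces to the short cohomological computation above once the $\mathbb{Q}$-vector space structure on $Z(G)/Z(G)_\fin$ has been identified.
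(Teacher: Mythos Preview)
Your proposal is correct and follows essentially the same approach as the paper. For part (i) you define the same multiplication map, establish surjectivity and the kernel using Proposition~\ref{prop:E(H,X)/Z(G)->Gamma} exactly as the paper does, and in fact supply more detail on the inverse construction than the paper's terse treatment; for part (ii) both you and the paper use the long exact sequence attached to $1 \to Z(G)_\fin \to Z(G) \to Z(G)/Z(G)_\fin \to 1$ and the vanishing of $H^p(\Gamma,\,\cdot\,)$ in positive degree for the quotient, the only cosmetic difference being that you phrase the key property as ``$\mathbb{Q}$-vector space'' while the paper says ``multiplication by $n$ is an automorphism'' and invokes Eckmann's transfer (Proposition~\ref{prop:transfer}).
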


\begin{proof}[Proof of (i)]
By Proposition \ref{prop:E(H,X)/Z(G)->Gamma}, $E(\{ H,X_{\alpha} \})$ meets every coset of $G$ in $E$ nontrivially, meaning $G$ and $E(\{ H,X_{\alpha} \})$ generate $E$. With this established, we see that there is a natural surjective homomorphism
\begin{center}
\begin{tabular}{c l l}
    $ G \rtimes E(\{ H,X_{\alpha} \})$ & $\rightarrow$ & $E$  \\
    $(g,e)$ & $\mapsto$ & $ge$. 
\end{tabular}
\end{center}
It is not hard to see that the kernel of this map is $\{ (z,z^{-1}) \mid z \in Z(G) \}$, i.e.,~the antidiagonal copy $Z(G)_{-\Delta}$. 
\end{proof}

\begin{subequations}\label{se:grpcoh}
Before we prove part (ii), we explain some details about the description of $E(\{H,X_\alpha\})$ mentioned in Theorem \ref{thm:listE}(i). The cohomology group
  $H^2(\Gamma,Z(G))$ can be described as the group
\begin{equation} Z^2(\Gamma,Z(G))/B^2(\Gamma,Z(G))\end{equation}
of cocycles modulo coboundaries. The set $Z^2(\Gamma,Z(G))$ of cocycles
consists of maps
\begin{equation}\begin{aligned} c&\, \colon \Gamma \times \Gamma
    \rightarrow Z(G),\\
    \gamma_1\cdot c(\gamma_2,\gamma_3) -c(\gamma_1\gamma_2,\gamma_3) &+
    c(\gamma_1,\gamma_2\gamma_3) - c(\gamma_1,\gamma_2) = 0\qquad
    (\gamma_i \in \Gamma).\end{aligned}\end{equation}
A coboundary begins with an arbitrary map $b\,\colon \Gamma \rightarrow
Z(G)$. The corresponding coboundary is
\begin{equation} db(\gamma_1,\gamma_2) = b(\gamma_1) + \gamma_1\cdot
  b(\gamma_2) -b(\gamma_1\gamma_2);
\end{equation}
  it is standard and easy to check that $db$ is a cocycle.
Given a cocycle $c$, the extension $E(\{H,X_\alpha\})$ is generated by
$Z(G)$ and additional elements
\begin{equation}
  \{\tilde\gamma \mid \gamma \in \Gamma\}
\end{equation}
which are subject to the relations
\begin{equation}\label{eq:discrel}
 \begin{aligned}
  \tilde\gamma_1 \tilde\gamma_2 &=
  c(\gamma_1,\gamma_2)\widetilde{\gamma_1\gamma_2}\\
  \tilde\gamma z \tilde\gamma^{-1} &= \gamma\cdot z \qquad (\gamma\in
  \Gamma, z \in Z(G)). \end{aligned}
\end{equation}

Conversely, if we are given an extension $E(\{H,X_\alpha\})$ as in
Theorem \ref{thm:listE}(i),
and we choose for each $\gamma\in \Gamma$ a preimage $\tilde\gamma \in
E(\{H,X_\alpha\})$, then these choices must
satisfy relations of the form \eqref{eq:discrel}, with $c$ a
cocycle. Replacing the representatives $\tilde\gamma$ with alternate
representatives
\begin{equation}
\tilde\gamma' = b(\gamma)\tilde\gamma \qquad (b(\gamma) \in Z(G))    
\end{equation}
(which are the only possibilities) replaces $c$ by $c' = c+db$.

More generally, consider a group $M$ and an abelian group $A$ together with a group action of $M$ on $A$. Recall that for $p \geq 0$, $H^p( M,A )$ can be described as the group
\begin{equation}
 Z^p( M,A )/B^p(M,A)     
\end{equation}
of cocycles modulo coboundaries. When $M$ is a finite group, the following result of Eckmann \cite{transfer} gives a condition under which the higher cohomology groups $H^p(M,A)$ (with $p > 0$) vanish. This result will help us prove Theorem \ref{thm:listE}(ii).  
\end{subequations} 

\begin{proposition}[Eckmann {\cite[Theorem 5]{transfer}}]\label{prop:transfer} 
Suppose that $\Gamma$ is a finite group of order $n$, and that $A$ is an abelian group on which $\Gamma$ acts. Then multiplication by $n$ acts by zero on $H^p(\Gamma,A)$. If multiplication by $n$ is an {\em automorphism} of $A$, then $H^p(\Gamma,A) = 0$ for all $p>0$.
\end{proposition}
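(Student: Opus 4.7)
The plan is to prove both assertions via the standard transfer (corestriction) argument. For any subgroup $H\subseteq\Gamma$ of finite index, there is a restriction homomorphism $\mathrm{res}\colon H^p(\Gamma,A)\to H^p(H,A)$ obtained by restricting cocycles, and a corestriction homomorphism $\mathrm{cor}\colon H^p(H,A)\to H^p(\Gamma,A)$ defined by summing translates over a set of coset representatives. The key identity I will invoke (this is the content of Eckmann's paper) is
$$\mathrm{cor}\circ\mathrm{res} \;=\; [\Gamma:H]\cdot\mathrm{id}_{H^p(\Gamma,A)}.$$

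First I would specialize to the trivial subgroup $H=\{1\}$, whose index in $\Gamma$ is $n$. The bar resolution for the trivial group is exact in positive degrees, so $H^p(\{1\},A)=0$ for all $p\ge 1$. The identity above then forces multiplication by $n$ to act as the zero map on $H^p(\Gamma,A)$ for every $p\ge 1$. This is the first assertion.

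For the second assertion, suppose multiplication by $n$ is an automorphism of $A$. Since the standard cochain groups $C^p(\Gamma,A)=\mathrm{Map}(\Gamma^p,A)$ inherit the structure of $\mathbb{Z}[1/n]$-modules functorially from $A$, and since the differentials are $\mathbb{Z}[1/n]$-linear, multiplication by $n$ remains an automorphism on each cohomology group $H^p(\Gamma,A)$. Combined with the first assertion that $n$ acts as zero, this forces $H^p(\Gamma,A)=0$ for all $p\ge 1$.

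The main obstacle is verifying the identity $\mathrm{cor}\circ\mathrm{res}=[\Gamma:H]\cdot\mathrm{id}$; everything else is formal. Concretely, given left coset representatives $\{g_1,\dots,g_m\}$ for $H$ in $\Gamma$ and a cocycle $f\colon H^p\to A$, one writes down an explicit averaging formula using the $g_i$, checks that it lands in cocycles, and computes directly that the composition with restriction recovers multiplication by $m$. This is bookkeeping-heavy but elementary; alternatively, one can derive it abstractly from Shapiro's lemma and the adjunction between induction and restriction of $\Gamma$-modules. Either way the computation is purely formal in the abelian $\Gamma$-module $A$, so no special feature of $Z(G)$ or the pinning is needed, and the proposition can simply be cited from \cite{transfer}.
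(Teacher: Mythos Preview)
Your proposal is correct and follows essentially the same approach as the paper's sketch: both specialize to the trivial subgroup, use that $H^p(\{1\},A)=0$ for $p>0$, and invoke Eckmann's identity that transfer (corestriction) composed with restriction is multiplication by the index $n$, whence $n$ kills $H^p(\Gamma,A)$. Your treatment of the second assertion via the $\mathbb{Z}[1/n]$-module structure on the cochain complex is a bit more explicit than the paper's ``immediate,'' but the argument is the same.
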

\begin{proof}[Proof sketch] Because the functor
  $$H^0(\{1\},A) = A$$
  is exact, it follows that $H^p(\{1\},A) = 0$ for all $p>0$. Therefore the obvious restriction map
  $$Q\colon H^p(\Gamma,A) \rightarrow H^p(\{1\},A) \qquad (p>0)$$
  must be zero. Eckmann in \cite{transfer} defines a natural {\em transfer} homomorphism
  $$T\colon H^p(\{1\},A) \rightarrow H^p(\Gamma,A),$$
  and proves that $T\circ Q$ is multiplication by $n$. Since $Q=0$ for all $p>0$, it follows that multiplication by $n$ must be zero on $H^p(\Gamma,A)$ for all $p>0$. The last assertion is immediate. \end{proof}

Finally, we are ready to prove Theorem \ref{thm:listE}(ii).

\begin{proof}[Proof of (ii)]
Set $D:= Z(G)/Z(G)_\fin$ and consider the short exact sequence of $\Gamma$-modules
\begin{equation}\label{eq:ses}
  1\longrightarrow Z(G)_\fin \longrightarrow Z(G) \longrightarrow D \longrightarrow
1.\end{equation}
Because $Z(G)$ is a reductive abelian group, it is a direct sum of copies of ${\mathbb C}^\times$ and a finite abelian group; so $D$ is a direct sum of copies of ${\mathbb C}^\times/(\text{roots of unity})$. It follows easily that multiplication by $n$ is an automorphism of $D$ for every positive $n$. By Proposition \ref{prop:transfer},
$$H^p(\Gamma,D) = 0 \qquad (p > 0).$$
Examining the long exact sequence in $\Gamma$-cohomology attached to \eqref{eq:ses}, we deduce Theorem \ref{thm:listE}(ii). (In fact we can arrange for all values of a representative cocycle to have order dividing some power of $|\Gamma|$.)
\end{proof}

One reason that Theorem \ref{thm:listE}(ii) is interesting is for keeping calculations accessible to a computer. We would like the cocycle defining our disconnected group to take values in $Z(G)_\fin$, because
such elements are easily described in a computer. 

\begin{remark}
This argument works equally well over any algebraically closed field of characteristic zero (and shows that the extensions described by Theorem \ref{thm:listE} are independent of the field). In finite characteristic the same is true as long as the characteristic does not divide the order of $\Gamma$. If the characteristic {\em does} divide the order of $\Gamma$, then the extension $E$ is no longer a reductive group, and matters are more
complicated.
\end{remark}

\bibliography{biblio} 

\begin{thebibliography}{1}

\bibitem{AV}
J.~Adams and D.~A.~Vogan Jr.
\newblock Parameters for twisted representations.
\newblock In {\em Representations of Reductive Groups}, volume 312 of {\em Progress in Mathematics}, pages 51--116. Birkh\"{a}user, Cham, 2015.

\bibitem{CE}
H.~Cartan and S.~Eilenberg.
\newblock {\em Homological Algebra}.
\newblock Princeton University Press, Princeton, 1956.

\bibitem{Chev}
C.~Chevalley.
\newblock {\em Classification des Groupes Alg\'{e}briques Semi-simples}, volume~3 of {\em Collected Works}.
\newblock Springer-Verlag, Berlin, 2005.

\bibitem{transfer}
B.~Eckmann.
\newblock Cohomology of groups and transfer.
\newblock {\em Ann.~of Math.~(2)}, 58:481--493, 1953.

\bibitem{Springer}
T.~A. Springer.
\newblock {\em Linear Algebraic Groups}.
\newblock Modern Birkh\"{a}user Classics. Springer, Boston, 1998.

\end{thebibliography}
\bibliographystyle{plain}

\end{document}